\newtheorem{thm}{Theorem}
\newtheorem{lem}[thm]{Lemma}
\newtheorem{prop}[thm]{Proposition}
\newtheorem{defn}[thm]{Definition}
\renewcommand{\Diamond}{\mc{D}_2}
\newcommand{\sat}{{\rm sat}}
\newcommand{\F}{\mathcal{F}}
\newcommand{\ids}{induced-$\Diamond$-saturated }
\newcommand{\Bn}{\mathcal{B}_n}
\newcommand{\La}{{\rm La}}
\newcommand{\B}{{\mathcal B}}
\newcommand{\mc}{\mathcal}
\newenvironment{pfct}[1]{\noindent{\bf Proof of #1.\,}}{\hfill$\Box$ \\}
\begin{document}
\title[Improved bounds for induced poset saturation]{Improved bounds for induced poset saturation}
\author{Ryan R. Martin}
\address{(Martin) Department of Mathematics, Iowa State University, Ames, Iowa, 50011}
\email{rymartin@iastate.edu}
\author{Heather C. Smith}
\address{(Smith) Department of Mathematics and Computer Science, Davidson College, Davidson, NC 28035}
\email{hcsmith@davidson.edu}
\author{Shanise Walker}
\address{(Walker) Department of Mathematics, University of Wisconsin-Eau Claire, Eau Claire, WI 54702 }
\email{walkersg@uwec.edu}
\maketitle

\begin{abstract}
	Given a finite poset $\mathcal{P}$, a family $\mathcal{F}$ of elements in the Boolean lattice is induced-$\mathcal{P}$-saturated if $\mathcal{F}$ contains no copy of $\mathcal{P}$ as an induced subposet but every proper superset of $\mathcal{F}$ contains a copy of $\mathcal{P}$ as an induced subposet.  The minimum size of an induced-$\mathcal{P}$-saturated family in the $n$-dimensional Boolean lattice, denoted $\sat^*(n,\mathcal{P})$, was first studied by Ferrara et al. (2017).
	
	Our work focuses on strengthening lower bounds. For the 4-point poset known as the diamond, we prove $\sat^*(n,\Diamond)\geq\sqrt{n}$, improving upon a logarithmic lower bound. For the antichain with $k+1$ elements, we prove $\sat^*(n,\mathcal{A}_{k+1})\geq (1-o_k(1))\frac{kn}{\log_2 k}$, improving upon a lower bound of $3n-1$ for $k\geq 3$. 
\end{abstract}

\section{Introduction}\label{introposets}

A \emph{partially ordered set}, or \emph{poset}, $\mc{P}=(P,\preceq)$ consists of a set $P$ with a binary relation ``$\preceq$'' which is reflexive, antisymmetric, and transitive. For $x,y\in P$, we say $x$ and $y$ are \emph{comparable} provided $x\preceq y$ or $y\preceq x$. Otherwise, we say $x$ and $y$ are \emph{incomparable}, writing $x\parallel y$.  For posets, $\mc{P}=(P, \preceq)$ and $\mc{P}^{\prime}=(P^{\prime}, \preceq)$, we say $\mc{P}^{\prime}$ is a  \emph{(weak) subposet} of $\mc{P}$ if there exists an injection $f:{P}^{\prime}\rightarrow {P}$ that preserves the partial ordering, i.e. whenever $u\preceq^{\prime} v$, we have $f(u)\preceq f(v)$. The \emph{$n$-dimensional Boolean lattice}, $\B_n=(2^{[n]}, \subseteq)$ is the poset consisting of all subsets of $[n]:=\{1,\ldots, n\}$, ordered by inclusion.  

For a poset $\mc{Q}=(Q,\preceq)$ and a family $\F \subseteq Q$, we often refer to $\F=(\F,\preceq)$ as the \emph{poset induced by $\F$ in $\mc{Q}$} which is obtained by restricting the binary relation $\preceq$ to $\F\times \F$. For any poset $\mc{P}$, we say that $\F$ is \emph{$\mc{P}$-free} provided $\mc{P}$ is not a subposet of $\F$. The maximum size of a $\mc{P}$-free family in $\B_n$ is denoted $\La(n,\mc{P})$. 

The study of $\mathcal P$-free families dates back to Sperner's~\cite{Sperner} proof that the maximum size of an antichain (a poset in which no two distinct elements are comparable) in $\B_n$ is ${\binom{n}{\lfloor n/2\rfloor}}$. In particular, this is the quantity $\La(n,\mc{P}_2)$ where $\mc{P}_2 = (\{x,y\}, \leq)$ such that $x\leq y$.  See Griggs and Li~\cite{GriggsLi} for an extensive survey of $\mc{P}$-free results. 

Extremal questions about sizes of $\mc{P}$-free families  are only interesting as maximization problems, however the notion of saturation allows us to formulate corresponding minimization questions. Before defining saturation for posets, we first give some background on the much older problem of graph saturation.

Let $G$ and $H$ be graphs. A spanning subgraph $F$ of $G$ is \emph{$(G,H)$-saturated} if $F$ contains no copy of $H$ as a subgraph but, for any edge $e\in E(G)- E(F)$, the graph $F+e=(V(F),E(F) \cup \{e\})$  contains a copy of $H$ as a subgraph. Let $\sat(G,H)$ denote the minimum number of edges in a $(G,H)$-saturated graph.

Saturation was introduced by Zykov \cite{Zyk} and  Erd\H{o}s, Hajnal, and Moon \cite{EHM} independently, when they considered $\sat(K_n,H)$, for $K_n$ being the complete graph on $n$ vertices. Erd\H{o}s, Hajnal, and Moon showed that for $n\geq k\geq 2$, $\sat(K_n, K_k)=(k-2)n-\binom{k-1}{2}$. Since its introduction, saturation has been studied extensively. 
Faudree, Faudree, and Schmitt~\cite{Faudree} provide a dynamic survey of saturated graphs.  

Saturation for posets was first introduced by Gerbner, et al.~\cite{Gerbner}.
\begin{defn}Given a host poset $\mc{Q}=(Q, \leq_{\mc{Q}})$, a target poset $\mc{P}=(P,\leq_{\mc{P}})$, and a family $\mc{F}\subseteq  Q$, we  say that $\mc{F}$ is \emph{$\mc{P}$-saturated} in $\mc{Q}$ if  the following  two properties hold:
\begin{itemize}
\item the poset induced by $\mc{F}$ in $\mc{Q}$ is $\mc{P}$-free, and 
\item for any $S\in Q-\mc{F}$, the poset $\mc{P}$ is a subposet of the poset induced by $\mc{F}\cup \{S\}$ in $\mc{Q}$.
\end{itemize}
\end{defn}

Given $n>0$ and a poset $\mc{P}$, the \emph{saturation number}, denoted $\sat(n, \mc{P})$, is the minimum size of a $\mc{P}$-saturated family in $\B_n$.     Let $\mc{P}_k$ denote the \emph{chain} consisting of $k$ elements in which each pair is comparable.  Gerbner et al. proved the following: 
 
\begin{thm}[Gerbner et al.~\cite{Gerbner}]\label{thm:Gerbner} For $n$ sufficiently large, 
\[2^{k/2-1}\leq \sat(n, \mc{P}_{k+1})\leq 2^{k-1}.\]
\end{thm}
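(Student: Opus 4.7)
The plan is to prove the upper and lower bounds separately: the upper bound by an explicit construction and the lower bound by a chain-structure argument.

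For the upper bound $\sat(n, \mc{P}_{k+1}) \leq 2^{k-1}$, I would exhibit the family
\[
\mathcal{F} = \{B : B \subseteq [k-2]\} \cup \{B \cup \{k-1, k, \ldots, n\} : B \subseteq [k-2]\},
\]
which has $2 \cdot 2^{k-2} = 2^{k-1}$ elements. First I would verify that the longest chain in $\mathcal{F}$ is $\emptyset \subsetneq \{1\} \subsetneq \cdots \subsetneq [k-2] \subsetneq [n]$ of length $k$, giving $\mc{P}_{k+1}$-freeness (a careful case analysis splitting any chain of $\mathcal{F}$ into its ``lower-group'' and ``upper-group'' portions shows that length $k$ is attained but not exceeded). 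Then I would verify saturation: any $S \notin \mathcal{F}$ must satisfy $\emptyset \subsetneq S \cap \{k-1,\ldots,n\} \subsetneq \{k-1, \ldots, n\}$, and with $T = S \cap [k-2]$, chaining the subsets of $T$ in $\mathcal{F}$ below $S$ with the sets $\{B \cup \{k-1,\ldots,n\} : T \subseteq B \subseteq [k-2]\}$ above $S$ produces a chain of length $(|T| + 1) + 1 + (k - 1 - |T|) = k+1$ in $\mathcal{F} \cup \{S\}$.

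For the lower bound $\sat(n, \mc{P}_{k+1}) \geq 2^{k/2-1}$, the plan is to extract structural information from saturation. Given a $\mc{P}_{k+1}$-saturated $\mathcal{F}$, first observe that applying saturation to any $S \notin \mathcal{F}$ yields a chain of length $k$ in $\mathcal{F}$ (the $k$ non-$S$ elements of the witness). I would then decompose $\mathcal{F}$ into level sets $L_i = \{A \in \mathcal{F} : \text{the longest chain in } \mathcal{F} \text{ ending at } A \text{ has length } i\}$; each $L_i$ is an antichain, and $L_i = \emptyset$ for $i > k$. The goal is to show $\sum_i |L_i| \geq 2^{k/2-1}$. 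My approach would be to pick a middle element $M \in L_{\lceil k/2 \rceil}$ and consider saturation witnesses for sets $S$ incomparable to $M$; each such $S$ must be completed to a $(k+1)$-chain not using $M$, forcing structural requirements on the sub-families of $\mathcal{F}$ strictly below and strictly above $M$. The hope is that these two sub-families each contain on the order of $2^{k/4}$ distinct elements, combining to give the target $2^{k/2-1}$.

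The main obstacle will be the lower bound. The upper bound reduces to routine case-checking once the construction is chosen. The lower bound is harder because saturation is a local condition (one witness per missing set), and translating it into a global count of elements in $\mathcal{F}$ requires exploiting product structure; a naive chain-counting argument—bounding the number of $(k+1)$-chains through each element of $\mathcal{F}$—appears too weak, and partitioning $\mathcal{B}_n$ around a carefully chosen reference set in $\mathcal{F}$ is the crucial step to split the counting into manageable pieces that multiply to the desired exponential bound.
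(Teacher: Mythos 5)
First, note that the paper does not prove this statement at all: Theorem~\ref{thm:Gerbner} is quoted from Gerbner et al.~\cite{Gerbner} and used as background, so there is no in-paper proof to compare against. Judged on its own terms, your upper-bound argument is correct and complete: the family of all $B\subseteq[k-2]$ together with all $B\cup\{k-1,\dots,n\}$ has size $2^{k-1}$, the splitting of any chain into its lower-group and upper-group portions gives a chain $B_1\subsetneq\cdots\subsetneq B_s\subseteq C_1\subsetneq\cdots\subsetneq C_t$ in $2^{[k-2]}$ and hence $s+t\le k$, and your count $(|T|+1)+1+(k-1-|T|)=k+1$ for the saturation step checks out. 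This is essentially the standard ``two stacked subcubes'' construction.

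The lower bound, however, has a genuine gap, and it is the half of the theorem that carries all the difficulty. Your plan is to fix a middle element $M$, argue that the subfamilies of $\mathcal{F}$ strictly below and strictly above $M$ each have about $2^{k/4}$ elements, and then ``combine'' them to get $2^{k/2-1}$. But these two subfamilies are disjoint subsets of $\mathcal{F}$, so the only combination available is addition: $|\mathcal{F}|\ge 2\cdot 2^{k/4}=2^{k/4+1}$, which is far short of $2^{k/2-1}$. A product of the two counts would require exhibiting $2^{k/4}\cdot 2^{k/4}$ \emph{distinct} elements of $\mathcal{F}$ indexed by pairs, and nothing in the sketch produces such an injection; moreover, no argument is offered for the $2^{k/4}$ bounds themselves (saturation only guarantees, for each missing $S$, a single witness chain of $k$ elements of $\mathcal{F}$, and the level-set decomposition into antichains $L_1,\dots,L_k$ gives no per-level lower bound). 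The shape of the bound $2^{k/2-1}$ is a strong hint that the intended mechanism is an induction that passes from a saturated $k$-Sperner system to a saturated $(k-2)$-Sperner system while at least doubling the count, i.e.\ a recursion of the form $f(k)\ge 2f(k-2)$; an additive split around a single reference element cannot generate exponential growth, so this step of your proposal would need to be replaced rather than repaired.
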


Morrison, Noel, and Scott~\cite{Morrison} improved the upper bound when they found a smaller construction for a $\mc{P}_7$-saturated family and used an iterative process to obtain the following upper bound:

\begin{thm}[Morrison, Noel, Scott~\cite{Morrison}]\label{thm:MNSupper}
There exists $\varepsilon>0$ such that for all $k>0$ and $n$ sufficiently large, 
\[\sat(n,\mc{P}_{k+1})\leq 2^{(1-\varepsilon)k}.\] In particular, $\epsilon = \left(1-\frac{\log_2 15}{4} \right)\approx 0.023277$ suffices.
\end{thm}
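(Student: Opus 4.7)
The plan is to prove this as a constructive upper bound by combining a single carefully designed small saturated family with a recursive amplification procedure. Concretely, I would aim to produce, for each sufficiently large $k$, an explicit $\mathcal{P}_{k+1}$-saturated family in $\mathcal{B}_n$ of size at most $2^{(1-\varepsilon)k}$, following the standard two-stage paradigm: first establish a base case — a $\mathcal{P}_{k_0+1}$-saturated family $\mathcal{F}_0 \subseteq \mathcal{B}_{n_0}$ whose size $c_0$ beats the $2^{k_0-1}$ bound of Theorem~2 — and then iterate $\mathcal{F}_0$ into itself to produce saturated families for all longer chains. The target exponent $\varepsilon = 1 - (\log_2 15)/4$ is what one obtains when the base-case ratio of ``elements per chain-level'' is $15^{1/4}$, so the hunt is for a base family with roughly that efficiency (for instance, a $\mathcal{P}_7$-saturated family of size $15$, which beats the bound $2^{6-1}=32$ of Theorem~2).

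For the base case I would look for a layered construction: a union of a few singletons, a few co-singletons, and a deliberately thin ``middle'' chosen so that every set $S \notin \mathcal{F}_0$ inherits a long induced chain in $\mathcal{F}_0 \cup \{S\}$ through its interaction with both the top and bottom layers. Verifying saturation for such a small family is a finite check (done by hand or by computer search), and the main cleverness is to choose $\mathcal{F}_0$ with enough internal structure that it survives being plugged into the recursive step. Natural structural requirements include having a unique minimum, a unique maximum, and interaction properties that guarantee that any missing set $S$ is forced into a long chain.

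For the iteration, the cleanest attempt is a product / block construction. Partition $[n] = A_1 \sqcup \cdots \sqcup A_m$ with each $|A_i|=n_0$ and define $\mathcal{F} \subseteq \mathcal{B}_n$ as the sets whose restriction to each $A_i$ is the image of a copy of $\mathcal{F}_0$. Then $|\mathcal{F}| = c_0^m$, and the length of the longest induced chain is controlled block by block: any chain in $\mathcal{F}$ projects to an induced chain in each coordinate copy of $\mathcal{F}_0$, so the overall chain length is bounded by $k_0 m$, giving $\mathcal{P}$-freeness for $\mathcal{P} = \mathcal{P}_{k_0 m + 1}$. Matching $k \approx k_0 m$ yields size $c_0^{k/k_0} = 2^{(\log_2 c_0 / k_0)\,k} = 2^{(1-\varepsilon)k}$ as required, and the announced constant $\varepsilon = 1 - (\log_2 15)/4$ is exactly what this calculation gives with a base-case efficiency of $15^{1/4}$ per chain-level.

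The main obstacle is verifying saturation in the product construction: $\mathcal{P}$-freeness of $\mathcal{F}$ is the easy half, but showing that every $S \notin \mathcal{F}$ completes an induced copy of $\mathcal{P}_{k+1}$ is delicate. The argument will have to identify some block $A_i$ on which $S$'s restriction is not in the embedded copy of $\mathcal{F}_0$, and then combine the chain produced by saturation of $\mathcal{F}_0$ on that block with a maximally long chain assembled from the other blocks; making these two pieces interlock to produce an induced chain of the correct length in $\mathcal{B}_n$ is precisely where the base case $\mathcal{F}_0$ has to be engineered carefully (for instance, with a distinguished minimum and maximum that always complete chains coming from other blocks). I would expect the bulk of the technical work to lie in specifying this structural compatibility condition and checking that the base case can actually be built to satisfy it.
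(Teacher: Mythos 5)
First, a point of reference: the paper does not prove this statement; it is quoted verbatim from Morrison, Noel, and Scott~\cite{Morrison}, so there is no in-paper proof to compare against. Your high-level strategy (a small saturated base family plus an iterative amplification) is indeed the strategy of~\cite{Morrison}, but as written your plan has a decisive gap in the amplification step, and the arithmetic around the base case does not produce the stated~$\varepsilon$.

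The concrete problem is the block product. If each block carries a $\mathcal{P}_{k_0+1}$-saturated copy of $\mathcal{F}_0$ (so chains in $\mathcal{F}_0$ have at most $k_0$ elements), then the longest chain in the product over $m$ blocks has $m(k_0-1)+1$ elements, not $mk_0$: consecutive sets in a chain must strictly increase in at least one block, and each block admits at most $k_0-1$ strict increases. Consequently your family is very far from $\mathcal{P}_{k_0m+1}$-saturated --- adding a single set can lengthen the longest chain by at most one element, so no added set can create a chain of $k_0m+1$ elements when $m\geq 2$. Even after retargeting to $k=m(k_0-1)+1$, saturation still fails: for a set $S$ whose restriction to every block lies outside $\mathcal{F}_0$, blockwise saturation guarantees in block $i$ a chain with $a_i$ elements below and $b_i$ above $S\cap A_i$, $a_i+b_i=k_0$, and assembling these into a chain of the product through $S$ yields only $(\sum_i a_i-m+1)+1+(\sum_i b_i-m+1)=mk_0-2m+3$ elements, which is $m-1$ short of the required $m(k_0-1)+2$. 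A distinguished minimum and maximum cannot close a deficit that grows with $m$; this is exactly why the composition in~\cite{Morrison} is a carefully engineered \emph{pairwise} gluing that sacrifices two chain-levels and a factor of two in cardinality at each step, and why the base family must satisfy several bespoke interaction properties beyond containing $\emptyset$ and the ground set. Relatedly, your proposed base case is off: a $\mathcal{P}_7$-saturated family of size $15$ would give rate $15^{1/6}$ (or $15^{1/5}$ with the corrected chain count) per chain-level, not $15^{1/4}$; the actual base case is a $\mathcal{P}_7$-saturated family of cardinality $30<2^5$, and the rate $\left(30/2\right)^{1/4}=15^{1/4}$ emerges from the ``lose two levels, halve the product'' recursion, not from a plain power $c_0^{k/k_0}$. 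The heart of the theorem is precisely the saturation of the composed family, and that is the part your proposal leaves unestablished (and, in its most natural reading, false).
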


Ferrara et al.~\cite{Ferrara} found that the saturation number $\sat(n,\mc{P})$ was constant in $n$ for many posets $\mc{P}$, so they introduced the notion of induced-saturation.

A poset $\mc{P}' = (P',\preceq')$ is an \emph{induced-subposet} of $\mc{P}=(P,\preceq)$ if there exists an injective function $f: {P}'\rightarrow {P}$ such that $u\preceq ' v$ if and only if $f(u)\preceq f(v)$.

\begin{defn} [Ferrara et al.~\cite{Ferrara}] 
 Given a host poset $\mc{Q}=(Q, \leq_{\mc{Q}})$, a target poset $\mc{P}=(P,\leq_{\mc{P}})$, and a family $\mc{F}\subseteq  Q$, we  say that $\mc{F}$ is \emph{induced-$\mc{P}$-saturated} in $\mc{Q}$ if  the following  two properties hold:
\begin{itemize}
\item the poset induced by $\F$ in $\mc{Q}$ does not contain an induced copy of $\mc{P}$, and 
\item for any $S\in {Q}-\mc{F}$,  the poset induced by $\mc{F}\cup \{S\}$ in $\mc{Q}$ contains an induced copy of $\mc{P}$.
\end{itemize}
\end{defn}

Given $n>0$ and a poset $\mc{P}$, the \emph{induced saturation number},  denoted $\sat^*(n, \mc{P})$, is the  {minimum} size of a family $\F$ that is induced-$\mc{P}$-saturated in $\B_n$.

To summarize some of the results of Ferrara et al.~\cite{Ferrara}, we first define several posets.  Let $\mc{A}_k$ denote the antichain with $k$ elements. The Hasse diagrams for $\mc{V}_2$, the butterfly $\bowtie$, and the diamond $\mc{D}_2$ are given in Figure~\ref{fig:posets}.

\begin{figure}[ht]
\begin{center}
\begin{tabular}{ccc}
\begin{tikzpicture}
\foreach \x in {(-.5,1),(0,0),(.5,1)} \draw [fill=black] \x circle (0.06);
\draw (-.5,1)--(0,0)--(.5,1);
\node at (0,-.5) {$\mc{V}_2$};
\end{tikzpicture}
& \hspace{.6in}
\begin{tikzpicture}
\foreach \x in {(-.5,0),(.5,0),(-.5,1), (.5,1)} \draw [fill=black] \x circle (0.06);
\draw (-.5,0)--(-.5,1)--(.5,0)--(.5,1)--(-.5,0);
\node at (0,-.5) {\text{the butterfly, }$\bowtie$};
\end{tikzpicture}
& \hspace{.6in}
\begin{tikzpicture}
\foreach \x in {(-.3,.5),(0,0),(.3,.5), (0,1)} \draw [fill=black] \x circle (0.06);
\draw (-.3,.5)--(0,0)--(.3,.5)--(0,1)--(-.3,.5);
\node at (0,-.5) {\text{the diamond, }$\mc{D}_2$};

\end{tikzpicture}

\end{tabular}
\end{center}
\caption{Hasse diagrams for three named posets.}
\label{fig:posets}
\end{figure}
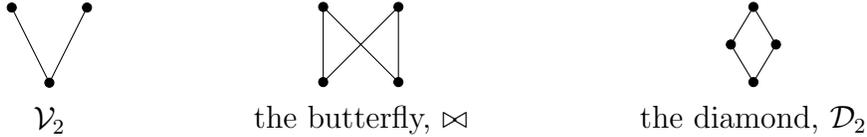 

Ferrara et al. proved the following regarding induced-$\mc{P}$-saturation for particular posets. 

\begin{thm}[Ferrara et al.~\cite{Ferrara}]\label{thm:inducedsatF} 
~\smallskip
\begin{enumerate}[(a)]
\item If $n\geq 2$, then $\sat^*(n, \mc{V}_2)=n+1$.
\item If $n\geq 3$, then $\lceil\log_2 n\rceil\leq \sat^*(n, \bowtie)\leq \binom{n}{2}+2n-1$.
\item If $n\geq 2$, then $\lceil\log_2 n\rceil\leq \sat^*(n, \Diamond)\leq n+1$. \label{it:diam}
\item If $n>k\geq 3$, then 
	\begin{equation*}
		3n-1\leq \sat^*(n, \mc{A}_{k+1})\leq (n-1)k-\left(\frac{1}{2}\log_2 k+\frac{1}{2}\log_2\log_2 k+O(1)\right).
	\end{equation*} \label{it:anti}
\end{enumerate}
\end{thm}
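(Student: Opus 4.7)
The plan is to handle the four parts (a)--(d) in sequence, pairing an explicit construction for each upper bound with a structural argument for each lower bound.

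\textbf{Constructions for the upper bounds.} For (a), I would take the family $\F = \{[n]\} \cup \{[n] \setminus \{i\} : i \in [n]\}$ of size $n+1$: every non-maximal element is covered only by $[n]$, so no element of $\F$ sits below two incomparable elements, while any $S \notin \F$ has $|S| \leq n-2$ and therefore lies below two incomparable co-singletons, producing an induced $\mc{V}_2$. For (c), a maximal chain $\emptyset = C_0 \subset C_1 \subset \cdots \subset C_n = [n]$ is automatically induced-$\Diamond$-free; for any $S \notin \F$ of rank $k$, setting $j = \max\{a : C_a \subseteq S\}$ and $l = \min\{b : S \subseteq C_b\}$ gives $j < k < l$, and $\{C_j, C_k, S, C_l\}$ is an induced diamond. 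For (b) and (d), the upper bounds are realized by more elaborate ad hoc constructions---layered families built from low levels together with an upper chain in (b), and most of two consecutive middle levels augmented by short chains in (d).

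\textbf{Lower bounds via twin coordinates.} The $n+1$ bound in (a) follows from the observation that induced-$\mc{V}_2$-freeness forces each up-set $\{B \in \F : A \subseteq B\}$ to be a chain, combined with saturation which then forces at least $n$ distinct maximal witnesses plus one additional bottom element. For the logarithmic bounds in (b) and (c), I would use a pigeonhole/twin-coordinate argument: define $\phi : [n] \to 2^{\F}$ by $\phi(i) = \{A \in \F : i \in A\}$; if $|\F| < \lceil \log_2 n \rceil$, then $|\mathrm{Im}\,\phi| \leq 2^{|\F|} < n$, so there exist $i \neq j$ with $\phi(i) = \phi(j)$, meaning $i$ and $j$ are interchangeable across every element of $\F$. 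The transposition of $i$ and $j$ is then an automorphism of the induced poset on $\F$, and this symmetry can be leveraged to produce a set $S \notin \F$ whose comparability pattern with $\F$ matches that of an already-present element, so that inserting $S$ creates no new induced butterfly or diamond, contradicting saturation.

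\textbf{The main obstacle.} The $3n - 1$ lower bound for antichains in (d) is the subtlest step and where I expect the real work to live. I would attempt a charging scheme that assigns to each coordinate $i \in [n]$ three distinct witnesses from $\F$ playing complementary roles in the saturation condition---for example, the witnesses forced by inserting $\{i\}$, by inserting $[n] \setminus \{i\}$, and by inserting an intermediate test set involving $i$. The delicate point is that saturation asserts only the \emph{existence} of a $(k+1)$-antichain through each inserted $S$, without any canonical choice of that antichain; converting this into a coordinate-indexed family of witnesses whose total count double-counts by at most $1$ requires a careful case analysis that distinguishes elements of $\F$ by their positions relative to the top, middle, and bottom of $\Bn$, and this is where the argument will be most fragile.
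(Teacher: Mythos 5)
First, a point of calibration: this theorem is quoted from Ferrara et al.~\cite{Ferrara} and is not proved anywhere in the present paper, so there is no in-paper argument to compare your proposal against; it can only be assessed on its own terms. On those terms, the two constructions you make explicit are correct. The family $\{[n]\}\cup\{[n]\setminus\{i\}:i\in[n]\}$ is induced-$\mc{V}_2$-saturated, and a maximal chain $C_0\subset\cdots\subset C_n$ is induced-$\Diamond$-saturated by exactly the $\{C_j,C_{|S|},S,C_l\}$ argument you give. However, the upper bounds in (b) and (d) are only described impressionistically, with no verification that the proposed families are induced-$\bowtie$-free (resp.\ induced-$\mc{A}_{k+1}$-free) and saturated; for the butterfly in particular the freeness check is the delicate part, and nothing you wrote would let a reader reconstruct it.

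The lower bounds are where the proposal genuinely fails as written. For (b) and (c), the pigeonhole step producing twin coordinates $i,j$ with $\phi(i)=\phi(j)$ is the right opening move, but the punchline is only asserted: the natural candidate set $S$ (one containing $i$ but not $j$) and its image under the transposition have identical comparabilities with every member of $\F$ while being incomparable to each other, and that is precisely the configuration that can play the role of the two middle elements of a diamond or the two minimal (or maximal) elements of a butterfly. So ``inserting $S$ creates no new induced copy'' does not follow from the symmetry alone; it needs a case analysis you have not supplied. For (a), ``at least $n$ distinct maximal witnesses plus one additional bottom element'' names the desired conclusion rather than deriving it. Most importantly, for (d) you explicitly stop at a plan: the $3n-1$ bound is the one part of this theorem whose lower-bound proof is genuinely hard (the paper itself notes that even $\sat^*(n,\mc{A}_3)=2n$ ``requires a non-trivial proof''), and a coordinate-indexed charging scheme with three witnesses per coordinate is a plausible strategy, but without the case analysis you acknowledge is missing there is no proof. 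In summary: two correct constructions, and the remaining six bounds (two upper, four lower) left unestablished.
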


In this paper, we improve lower bounds for induced saturation numbers for the $k$-antichain and the diamond. In particular, Theorem~\ref{thm:diamond} improves the lower bound in Theorem~\ref{thm:inducedsatF}{\it (\ref{it:diam})} and Theorem~\ref{thm:antichain} improves the lower bound in  Theorem~\ref{thm:inducedsatF}{\it (\ref{it:anti})}.
 
 \begin{thm}\label{thm:diamond}
For $n>0$, $\sat^*(n,\Diamond)\geq\lceil\sqrt{n}\,\rceil$.
\end{thm}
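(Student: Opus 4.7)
The plan is to suppose, for contradiction, that $\F$ is induced-$\Diamond$-saturated in $\B_n$ with $|\F|=k$, and to derive the bound $n\le k^2$, which yields $k\ge\lceil\sqrt{n}\,\rceil$. The first step is to unpack induced-$\Diamond$-freeness structurally: for any $F\subsetneq G$ in $\F$, the set $\{H\in\F\colon F\subsetneq H\subsetneq G\}$ must be a chain, since two incomparable intermediate elements together with $F$ and $G$ would form an induced diamond. A short role-by-role analysis of the induced-diamond condition then shows that for every $i\in[n]$ the up-set $A_i:=\{F\in\F\colon i\in F\}$ is neither empty nor all of $\F$. Indeed, if $A_i=\emptyset$ then $\{i\}$ can only sit above $\emptyset$ (if it belongs to $\F$), which rules out all three potential roles (bottom, middle, top) that $\{i\}$ could play in an induced diamond upon being added to $\F$, contradicting saturation; the dual argument rules out $A_i=\F$.

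The core of the proof is to construct an injective labeling $\lambda\colon [n]\to \F\times\F$. For each $i\in[n]$ I would pick $Q_i$ to be a canonically chosen minimal element of $A_i$ in the partial order on $\F$, and $P_i$ to be a canonically chosen maximal element of $\F\setminus A_i$ with $P_i\subsetneq Q_i$, and set $\lambda(i):=(P_i,Q_i)$. Existence and uniqueness of these canonical choices are to be extracted from the induced-$\Diamond$-free chain structure combined with saturation: for instance, if $A_i$ admitted two incomparable minimal elements $m_1,m_2$ with a common upper bound $G\in\F$, then $m_1\cap m_2\notin\F$ (else $\{m_1\cap m_2,m_1,m_2,G\}$ would be an induced diamond sitting inside $\F$), but saturation applied to $m_1\cap m_2$ would produce a configuration contradicting the minimality of $m_1,m_2$ in $A_i$. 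A dual argument pins down $P_i$.

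To finish, I suppose $\lambda(i)=\lambda(j)=(P,Q)$ for some $i\ne j$, so $\{i,j\}\subseteq Q\setminus P$, and I would consider the toggled set $S:=Q\setminus\{i\}$. By the extremal choice of $Q$ one has $S\notin\F$, and then each possible role of $S$ in an induced diamond of $\F\cup\{S\}$ must be ruled out: in the bottom role, the two required incomparable supersets of $S$ in $\F$ would both contain $i$ and therefore be forced to coincide with $Q$; in the middle role, the required strict subset of $S$ lying in $\F$ and not containing $i$ would contradict the maximality of $P$; the top role fails by a dual analysis using the chain structure of upper intervals. The resulting contradiction with saturation gives injectivity of $\lambda$, whence $n=|\lambda([n])|\le |\F|^2=k^2$. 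The main obstacle is the final role-exclusion step: executing it cleanly hinges on the extremal definitions of $P_i, Q_i$ and on the chain property of intervals in $\F$, and a few boundary cases (for example, whether $\emptyset\in\F$ or $[n]\in\F$) are likely to require individualized treatment before the counting bound drops out.
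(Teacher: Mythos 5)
Your overall counting frame---injecting $[n]$ into $\F\times\F$ to conclude $n\le|\F|^2$---is the same as the paper's, but the pairs you choose do not support the injection, and there are two genuine gaps. First, $P_i$ need not exist: you require $P_i\in\F$ with $P_i\subsetneq Q_i$, but a minimal element of $A_i$ can perfectly well be a minimal element of the whole poset $\F$, in which case $\F$ contains no proper subset of $Q_i$ at all. (You may assume $\emptyset\notin\F$ by Theorem~\ref{thm:emptyfull}, but then $\F$ has nonempty minimal members, and any $i$ in such a member may have no usable $Q_i$ if every minimal element of $A_i$ is of this kind.) This is not a boundary case that individualized treatment will absorb; it can affect the definition of $\lambda$ on many coordinates.

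Second, and more seriously, the injectivity step fails as argued. Suppose $\lambda(i)=\lambda(j)=(P,Q)$ and add $S=Q\setminus\{i\}$. In the bottom role, a set $T_1\in\F$ with $S\subset T_1$ need not contain $i$: it can satisfy $Q\setminus T_1=\{i\}$ while containing elements outside $Q$, hence be incomparable to $Q$, and such a $T_1$ violates neither the minimality of $Q$ in $A_i$ nor anything else in your setup. In the middle role, the set $W\in\F$ with $W\subset S$ need not contain $P$; since $P$ is only a maximal (not maximum) element of $\{F\in\F\setminus A_i:F\subsetneq Q\}$, a $W$ incomparable to $P$ contradicts nothing, and the upper set $U$ can likewise satisfy $Q\setminus U=\{i\}$ without being comparable to $Q$. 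So the diamond created by $S$ can live entirely on sets ``off to the side'' of $Q$, and no contradiction with your extremal choices materializes. The paper's Lemma~\ref{lem:diamondkey} turns exactly these off-to-the-side witnesses to its advantage: for each $i\in F^*$ it produces $F_i,G_i\in\F$ with $F_i\subseteq F^*$ and $F_i-G_i=\{i\}$, where $G_i$ is \emph{not} required to be a subset of $F_i$. Because the set difference is exactly the singleton $\{i\}$, the ordered pair $(F_i,G_i)$ determines $i$, so injectivity is automatic and $n\le|\F|(|\F|-1)$ follows with no case analysis. If you drop the nestedness requirement on your pairs and instead record a pair whose difference is exactly $\{i\}$, your argument becomes the paper's.
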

 
 \begin{thm}\label{thm:antichain}
For $k \geq 3$ and $n$ sufficiently large,
$\sat^*(n, \mc{A}_{k+1})\geq \left(1-\frac{1}{\log_2 k}\right)\frac{kn}{\log_2 k}$.
\end{thm}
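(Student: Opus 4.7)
The plan starts with a structural reduction via Dilworth's theorem. Given an induced-$\mc{A}_{k+1}$-saturated family $\F \subseteq \B_n$ with $\F \neq 2^{[n]}$ (the alternative $\F = 2^{[n]}$ is vastly larger than the claimed bound), its width is exactly $k$: at most $k$ because $\F$ avoids $\mc{A}_{k+1}$, and at least $k$ because for any $S \in 2^{[n]}\setminus\F$, the $k$ elements of the induced $\mc{A}_{k+1}$ in $\F\cup\{S\}$ other than $S$ form a $k$-antichain inside $\F$. Write $\F = \mathcal{C}_1 \sqcup \cdots \sqcup \mathcal{C}_k$ as a disjoint union of chains. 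Since two elements in the same $\mathcal{C}_i$ are comparable, every $k$-antichain of $\F$ picks exactly one element from each $\mathcal{C}_i$.

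The key geometric observation is the following. For each $i$, let $T_i = \{S \in 2^{[n]} : S \text{ is comparable to every element of } \mathcal{C}_i\}$. Then $T_i \subseteq \F$. Indeed, if $S \notin \F$, saturation yields a $k$-antichain $\mathcal{A}(S) \subseteq \F$ incomparable to $S$; by the previous remark, this antichain uses one element from each chain, so some element of $\mathcal{C}_i$ is incomparable to $S$, forcing $S \notin T_i$. Writing $\mathcal{C}_i = \{A_{i,1}\subsetneq\cdots\subsetneq A_{i,c_i}\}$ and padding with $A_{i,0}=\emptyset$, $A_{i,c_i+1}=[n]$, an element $S$ is comparable to every $A_{i,j}$ iff $A_{i,j^*} \subseteq S \subseteq A_{i,j^*+1}$ for a unique $j^* \in \{0,\ldots,c_i\}$; thus $T_i$ is the union of $c_i+1$ Boolean intervals meeting only at the chain elements, so $|T_i| = \sum_{j=0}^{c_i} 2^{d_{i,j}} - c_i$ with $d_{i,j} = |A_{i,j+1}\setminus A_{i,j}|$ and $\sum_j d_{i,j} = n$.

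Finally, applying convexity of $x \mapsto 2^x$ to the $d_{i,j}$'s gives $\sum_j 2^{d_{i,j}} \geq (c_i+1)\cdot 2^{n/(c_i+1)}$; combined with $|T_i| \leq |\F|$ and the substitution $s_i = c_i + 1$, this reads $s_i \cdot 2^{n/s_i} \leq |\F| + s_i - 1$. Summing over $i$ (using $\sum_i s_i = |\F| + k$) yields $\sum_i s_i \cdot 2^{n/s_i} \leq (k+1)|\F|$, while a second Jensen step on the convex map $s \mapsto s \cdot 2^{n/s}$ (convex on $(0,\infty)$ by a direct derivative computation) gives $\sum_i s_i\cdot 2^{n/s_i} \geq (|\F|+k) \cdot 2^{nk/(|\F|+k)}$. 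Chaining these produces $2^{nk/(|\F|+k)} \leq k+1$, whence
\[ |\F| \geq \frac{nk}{\log_2(k+1)} - k. \]
A brief algebraic check that $(\log_2 k)^2 \geq \log_2(k+1)(\log_2 k - 1)$ for $k \geq 3$, together with the observation that the $-k$ error is absorbed once $n \gtrsim (\log_2 k)^2$, then delivers the precise bound $\left(1 - \frac{1}{\log_2 k}\right)\frac{kn}{\log_2 k}$. The main obstacle is the structural claim $T_i \subseteq \F$: it hinges on the exact width-$k$ chain decomposition and the fact that every maximum antichain hits every chain exactly once; the remaining convexity manipulations are routine.
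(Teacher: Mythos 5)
Your proof is correct, and it takes a genuinely different route from the paper's. The paper also begins with Dilworth's theorem, but then argues locally via \emph{gaps} in the chains: it first performs surgery on the chain partition to eliminate ``wide'' gaps (gaps containing three or more elements of another chain), after which every gap has size at most $\lfloor\log_2 k\rfloor+1$, because all $2^d-2$ sets strictly inside a gap of size $d$ are forced into $\F$ yet at most $2(k-1)$ of them fit in the other chains; counting $\lceil n/(\lfloor\log_2 k\rfloor+1)\rceil+1$ elements per chain then gives part~{\it(1)} of Theorem~\ref{thm:satlowerbd}. Your key structural claim $T_i\subseteq\F$ is the same forcing phenomenon in disguise --- a set lying in a gap of $\mc{C}_i$ is precisely a set comparable to all of $\mc{C}_i$ that is not on the chain, and your justification (a maximum antichain must meet every chain of a minimum chain cover) is a clean alternative to the paper's ``re-partition into $k$ chains'' argument --- but your counting is global rather than local: $|\F|\ge|T_i|=\sum_j 2^{d_{i,j}}-c_i$ followed by two applications of Jensen's inequality replaces both the wide-gap surgery and the per-gap size bound. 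This buys you a shorter argument and the estimate $|\F|\ge nk/\log_2(k+1)-k$, which is never weaker than the paper's part~{\it(1)} (since $\log_2(k+1)\le\lfloor\log_2 k\rfloor+1$) and is strictly better when $k$ is a power of $2$; what the paper's local gap analysis buys in exchange is its second bound, part~{\it(2)} of Theorem~\ref{thm:satlowerbd}, which is superior for small $k$. Your concluding numerics check out: the inequality $(\log_2 k)^2\ge\log_2(k+1)(\log_2 k-1)$ does hold for all $k\ge3$, and $n$ on the order of $(\log_2 k)^2$ absorbs the additive $-k$, comfortably within the theorem's ``$n$ sufficiently large'' (the paper itself only claims $n\ge\log_2^3 k$ suffices).
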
 
We note at the end of Section~\ref{antichainresults} that $n\geq\log_2^3 k$ suffices to satisfy the bound in Theorem~\ref{thm:antichain}. 

In Section~\ref{diamondresults} we prove Theorem~\ref{thm:diamond} while Section~\ref{antichainresults} contains the proof of Theorem~\ref{thm:antichain}.


\section{Induced-$\Diamond$-saturated results}\label{diamondresults}

Theorem~\ref{thm:inducedsatF}{\it(\ref{it:diam})} gives a lower bound of $\lceil\log_2 n\rceil$ for $\sat^*(n,\Diamond)$, which is far from the upper bound of $n+1$. 
As presented in \cite{Ferrara}, this logarithmic lower bound holds for $\sat^*(n,\mc{P})$ for a much larger class of posets,  however they did obtain the following partial result for $\sat^*(n,\mathcal{D}_2)$   which is specific to the diamond: 

\begin{thm}[Ferrara et al.~\cite{Ferrara}]
\label{thm:emptyfull}
Let $\F$ be an induced-$\Diamond$-saturated family in $\B_n$. If $\emptyset \in \F$ or $[n]\in \F$, then $|\F|\geq n+1$. 
\end{thm}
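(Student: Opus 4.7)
The plan is to assume WLOG $\emptyset \in \F$ (the case $[n] \in \F$ follows by complementation, since the diamond $\Diamond$ is self-dual, so $\F^c := \{[n] \setminus A : A \in \F\}$ is induced-$\Diamond$-saturated with $\emptyset \in \F^c$ and $|\F^c| = |\F|$). The goal is to construct an injection $\phi : [n] \to \F \setminus \{\emptyset\}$; together with $\emptyset$ itself, this yields $|\F| \geq n+1$.

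First I would prove the \emph{chain-below} lemma: for every $Z \in \F$, the set $\{F \in \F : F \subseteq Z\}$ is a chain. Indeed, two incomparable elements $X, Y \in \F$ strictly between $\emptyset$ and $Z$ would, together with $\emptyset$ and $Z$, form an induced copy of $\Diamond$ in $\F$, contradicting induced-$\Diamond$-freeness. Consequently $(\F, \subseteq)$ is a rooted tree with root $\emptyset$, and each $A \in \F \setminus \{\emptyset\}$ has a unique parent $P_A := \max\{F \in \F : F \subsetneq A\}$. Next, I would establish the dichotomy that for each $i \in [n]$, either $\{i\} \in \F$, or else there exist $M, T \in \F$ with $\emptyset \neq M \subsetneq T$, $i \notin M$, and $i \in T$. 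The second case follows from saturation: adding $\{i\}$ creates an induced diamond, and the chain-below lemma rules out $\{i\}$ playing the top role (only $\emptyset$ lies below $\{i\}$) or the bottom role (two incomparable middles above $\{i\}$ would share a common top $T \in \F$ and hence lie in the chain below $T$, forcing comparability). So $\{i\}$ must be a middle, supplying the required $M$ and $T$.

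I would then define $\phi(i) := \{i\}$ in the first case and $\phi(i) := T$ (for a carefully chosen, say minimal, valid witness $T$) in the second. Injectivity among Case A indices is immediate, as is injectivity between Case A and Case B indices (since $|\{i\}| = 1$ while any Case B witness has size at least $2$). The substantive obstacle is ruling out collisions $\phi(i) = \phi(j) = T$ for distinct Case B indices $i, j$. In that scenario, the chain-below-$T$ structure applied to witnesses $M_i, M_j$ (both non-empty subsets of the chain below $T$ missing $i$ and $j$ respectively) forces $i$ and $j$ to first appear on the chain from $\emptyset$ to $T$ only at $T$ itself, so $\{i, j\} \subseteq T \setminus P_T$. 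I would then apply saturation to the intermediate set $S := P_T \cup \{i\}$, which by the tree structure is not in $\F$, and analyze the role of $S$ in the induced diamond that must arise upon adding $S$: the top and bottom roles are eliminated by the chain-below lemma, and the middle role produces an alternative valid witness $U \in \F$ for $i$ that is incomparable to $T$ in $\mathcal{B}_n$, enabling one to reroute $\phi(i) \leftarrow U$ and break the collision. Assembling these local rerouting moves into a global injection, equivalently verifying the Hall condition on the bipartite graph between coordinates and valid witnesses, is the main technical step of the proof.
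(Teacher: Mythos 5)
This theorem is quoted by the paper from Ferrara et al.\ and no proof of it appears in the present manuscript, so there is nothing in-paper to compare against; I will therefore assess your argument on its own terms.

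Your preliminary structure is sound: the reduction to the case $\emptyset\in\F$ via self-duality of $\Diamond$, the chain-below lemma (two incomparable members of $\F$ under a common $Z\in\F$ would form an induced diamond with $\emptyset$ and $Z$), the resulting tree structure with parents $P_A$, and the dichotomy for each $i\in[n]$ are all correct. The collision analysis is also on the right track: with $T$ chosen as a \emph{minimal} valid witness one does get $i\in T\setminus P_T$ (note that your stated justification --- ``the chain-below-$T$ structure applied to witnesses $M_i,M_j$'' --- is not quite the right reason; the existence of $M_i$ alone does not force $i\notin P_T$, and you genuinely need the minimality of $T$ here, which does work), and the set $S=P_T\cup\{i\}$ is indeed not in $\F$, cannot be the top or bottom of the new diamond, and as a middle element yields a $U\in\F$ with $U\supsetneq S$ and $U\parallel T$ that is again a valid witness for $i$ (one can take $M=P_T$, which is nonempty and misses $i$).

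The genuine gap is the step you yourself label ``the main technical step'': converting the local rerouting into a global injection. What you have proved is only that two indices sharing a minimal witness force the existence of one additional witness; this is the Hall condition for sets of size two. It does not rule out, for example, three indices $i,j,\ell\in T\setminus P_T$ whose rerouting diamonds all produce the same alternative witness $U$, nor does it control cascading collisions after a reroute (the new witness $U$ need not be minimal, so the structural facts you derived for minimal witnesses no longer apply to it, and $U$ may already be the image of some other index). Verifying $|N(I)|\ge |I|$ for arbitrary $I\subseteq[n]$ is a genuinely global statement that your pairwise argument does not deliver, and no mechanism is offered for it. As written, the proof is therefore incomplete: everything up to and including the construction of one alternative witness is correct, but the injection --- and hence the bound $|\F|\ge n+1$ --- is not established.
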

\noindent This led to the conjecture $\sat^*(n,\Diamond) = \Theta(n)$  \cite{Ferrara}.  

We prove that any induced-$\Diamond$-saturated family must have size at least $\sqrt{n}$. To establish this, we first need the following technical lemma. 

\begin{lem}\label{lem:diamondkey}
	For $n\geq 1$, let $\F$ be an \ids~family in $\Bn$ and let $F^*$ be an arbitrary member of $\F$. 
		\begin{enumerate}
		\item	For each $i \in F^*$, there exist $F_i,G_i \in \F$ such that $F_i \subseteq F^*$ and  $F_i-G_i = \{i\}$.
				 \label{it:sub}
		\item	For each $j \not\in F^*$, there exist $G_j,F_j \in \F$ such that $G_j \supseteq F^*$ and $F_j-G_j = \{j\}$.\label{it:sup}
	\end{enumerate}
\end{lem}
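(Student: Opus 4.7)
My plan is to prove part (1) by strong induction on $|F^*|$, with $F^* = \emptyset$ as the vacuous base case; part (2) follows from the dual induction on $n-|F^*|$ using $S = F^* \cup \{j\}$, so I sketch only part (1). In the inductive step, set $S := F^* \setminus \{i\}$. If $S \in \F$, take $(F_i, G_i) = (F^*, S)$. Otherwise $S \notin \F$, and by induced-$\Diamond$-saturation, $\F \cup \{S\}$ contains an induced $\Diamond$ that must use $S$. Write its vertices as $X_1 \subsetneq X_2, X_3 \subsetneq X_4$ with $X_2 \parallel X_3$ and case-split on the role of $S$.

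If $S = X_4$, then $X_1 \subsetneq X_2, X_3 \subsetneq S \subsetneq F^*$ together with $F^* \in \F$ produces an induced $\Diamond$ inside $\F$, a contradiction. If $S = X_1$, both $X_2$ and $X_3$ strictly contain $F^* \setminus \{i\}$; if each also contained $i$, each would strictly contain $F^*$ (incomparability forbidding equality with $F^*$), giving the induced $\Diamond$ $F^*, X_2, X_3, X_4$ in $\F$. So, without loss of generality, $i \notin X_3$; then $X_3 \supsetneq F^* \setminus \{i\}$ with $i \notin X_3$ forces $F^* \setminus X_3 = \{i\}$, and $(F_i, G_i) = (F^*, X_3)$ works.

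The substantive case is $S$ in a middle position, say $S = X_2$. If $i \notin X_4$, then $X_4 \supseteq S$ forces $F^* \setminus X_4 = \{i\}$, and $(F^*, X_4)$ works. If $i \in X_4$, then $X_4 \supseteq F^*$, and I analyze $X_3$ versus $F^*$: the relation $X_3 \supseteq F^*$ contradicts $S \parallel X_3$ since $X_3 \supseteq F^* \supseteq S$; the relation $X_3 \parallel F^*$ forces $X_4 \supsetneq F^*$ (otherwise $X_3 \subsetneq X_4 = F^*$ would make $X_3$ comparable to $F^*$), yielding the induced $\Diamond$ $X_1, F^*, X_3, X_4$ in $\F$. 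The only remaining possibility is $X_3 \subsetneq F^*$, and combined with $S \parallel X_3$ this forces $i \in X_3$. Since $|X_3| < |F^*|$, the inductive hypothesis applied to $X_3$ furnishes a pair $F_i \subseteq X_3 \subseteq F^*$ with $F_i - G_i = \{i\}$, completing the step.

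The main obstacle is this middle-role case: the dichotomy on whether $i \in X_4$ must be paired with a complete elimination of the three alternative relations between $X_3$ and $F^*$, each step relying on $\F$ being induced-$\Diamond$-free to rule out a competing diamond. Part (2) follows the same template after dualization: with $S = F^* \cup \{j\}$, the bottom role is the forbidden one, the top role produces the pair directly through the unique vertex containing $j$, and the middle role either yields the pair immediately (when $j \in X_1$, so $X_1 \setminus F^* = \{j\}$) or reduces to the inductive hypothesis applied to a strictly larger set $X_3 \supsetneq F^*$ in $\F$.
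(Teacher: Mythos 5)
Your proof is correct and follows essentially the same route as the paper: add $F^*\setminus\{i\}$, split on its role (top, bottom, middle) in the forced induced diamond, and in the middle case use the trichotomy on $X_3$ versus $F^*$ to either produce the witness pair or descend to a smaller set containing $i$. The only difference is presentational: you run an explicit strong induction on $|F^*|$, whereas the paper achieves the same descent by choosing a minimum-size $F\in\F$ with $i\in F\subseteq F^*$ at the outset and turning the recursive case into a contradiction with minimality.
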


\begin{proof}
	We will prove {\it(\ref{it:sub})} and the proof for  {\it(\ref{it:sup})} follows similarly. 
	
	For {\it(\ref{it:sub})}, first observe that if $F^*=\emptyset$, then the statement is trivially true. Assume $F^* \neq \emptyset$ and fix $i \in F^*$.

	If there is $F \in \F$ such that $\{i\}\subseteq F\subseteq F^*$ and $F-\{i\} \in \F$, then $F_i=F$ and $G_i=F-\{i\}$ have the desired properties to satisfy the lemma.

	Now suppose no such pair exists in $\F$. Choose $F$ to be a set of minimum size in $\{F\in \F:\{i\}\subseteq F \subseteq F^*\}$. Since $F-\{i\} \not\in \F$ and $\F$ is an \ids family, the family $\F \cup \{F-\{i\}\}$ must contain an induced copy of a diamond. This diamond will have $F-\{i\}$ as a top element, a bottom element, or a side element. We examine each possibility separately.
	
	In the case where $F-\{i\}$ forms the top element of a diamond, there exist distinct $S,T_1,T_2\in {\mathcal F}$ such that $T_1$ and $T_2$ are incomparable while $S\subset T_j \subset F-\{i\}$ for each $j\in\{1,2\}$. Thus, $F\in{\mathcal F}$ forms the top element of the diamond $\{S,T_1,T_2,F\}$, a contradiction to $\mathcal F$ being diamond-free.
		
	In the case where $F-\{i\}$ forms the bottom element of the diamond, there exist distinct $T_1,T_2,U \in \F$ such that $T_1$ and $T_2$ are incomparable while $F-\{i\} \subset T_j \subset U$ for each $j\in \{1,2\}$.
	Since $F, T_1, T_2, U$ does not form a diamond, we conclude $i\not\in T_j$ for some $j\in \{1,2\}$.
	Without loss of generality, $F-\{i\}\subset T_1$ but $F\not\subset T_1$.
	So, we can set $F_i = F$ and $G_i = T_1$ to satisfy the conclusion of the lemma in this case.
	
	Finally, in the case where $F-\{i\}$ forms the side element of a diamond, there exist distinct $S,T,U\in {\mathcal F}$ such that $T$ and $F-\{i\}$ are incomparable while $S\subset T\subset U$ and $S\subset F-\{i\} \subset U$. Now we consider two subcases based on the relationship of $F$ to $U$.
		
	If $F \not\subseteq U$, then we can set $F_i = F$ and $G_i = U$ since $F-\{i\}\subset U$ while $F \not\subset U$.
	
	Otherwise $F \subseteq U$. Since $\F$ is induced-$\mc{D}_2$-saturated, $S,F,T,U$ does not form a diamond, so $F$ and $T$ must be comparable. 
	Since $F-\{i\}$ and $T$ are incomparable, we must have $T\subset F$ and thus $\{i\} \subseteq T \subset F \subseteq F^*$.
	But this contradicts the choice of $F$ as a minimum-sized subset of $F^*$ in $\F$ that contains $i$.
\end{proof}

Now we have the tools to prove our main result for induced-$\Diamond$-saturation.
\smallskip

\begin{pfct}{Theorem~\ref{thm:diamond}}
	 Construct a simple directed graph $D$ as follows: The vertex set will be $\F$ and we include arc $(A,B)$ if and only if $|B - A|=1$. Thus $D$ has at most one arc for each ordered pair of vertices.
	
	On the other hand, Lemma~\ref{lem:diamondkey} gives a lower bound on the number arcs in our digraph. Fix any $F^{*}\in\F$.
 
	 For each $i\in F^{*}$ there is an arc $(F_i,G_i)$, and for each $j\in [n]-F^{*}$ there is an arc $(F_j,G_j)$. Consequently, there are at least $n$ distinct arcs in $D$.  
	
	Because the total number of arcs in this graph is at most $|{\mathcal F}|(|{\mathcal F}|-1)$, it is the case that $|{\mathcal F}|(|{\mathcal F}|-1)\geq n$ and so $|{\mathcal F}|\geq \lceil\sqrt{n}\rceil$.
	\end{pfct}

We can obtain a much better lower bound when the \ids  family exhibits certain properties. Let $\F$ be an \ids family. If $\emptyset\in \F$ or $[n]\in \F$, then Theorem~\ref{thm:emptyfull} gives a lower bound of $n+1$ for $|\F|$. On the other hand, Proposition~\ref{prop:hilo} shows that $|\F|$ is also large whenever $\F$ contains a large minimal element or a small maximal element. 

\begin{prop}\label{prop:hilo}
	Let $\F$ be an \ids~family in $\Bn$. 
	\begin{enumerate}
		\item 	If $S^*$ is minimal in $\F$, then $|S^*|+1\leq |\F|$. \label{it:sstar}
		\item 	If $U^*$ is maximal in $\F$, then $n-|U^*|+1\leq |\F|$. \label{it:ustar}
	\end{enumerate}
\end{prop}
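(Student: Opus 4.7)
The plan is to prove both parts as essentially immediate corollaries of Lemma~\ref{lem:diamondkey}, by exploiting the extra rigidity that minimality (respectively maximality) imposes on the sets produced by that lemma.

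For part~\emph{(\ref{it:sstar})}, I would apply Lemma~\ref{lem:diamondkey}\emph{(\ref{it:sub})} with $F^*=S^*$. For each $i\in S^*$, the lemma supplies $F_i,G_i\in\F$ with $F_i\subseteq S^*$ and $F_i-G_i=\{i\}$. Since $S^*$ is minimal in $\F$, the only member of $\F$ contained in $S^*$ is $S^*$ itself, so necessarily $F_i=S^*$, and hence $G_i\in\F$ satisfies $S^*-G_i=\{i\}$. I would then check that the sets $G_i$ (for $i\in S^*$) are pairwise distinct and distinct from $S^*$: for $i\neq j$ in $S^*$ we have $i\in S^*-\{j\}\subseteq G_j$ but $i\notin G_i$, so $G_i\neq G_j$ and $G_i\neq S^*$. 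This exhibits $|S^*|+1$ distinct members of $\F$, giving $|\F|\geq|S^*|+1$.

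Part~\emph{(\ref{it:ustar})} is dual. Apply Lemma~\ref{lem:diamondkey}\emph{(\ref{it:sup})} with $F^*=U^*$. For each $j\in[n]-U^*$ the lemma produces $G_j,F_j\in\F$ with $G_j\supseteq U^*$ and $F_j-G_j=\{j\}$. Because $U^*$ is maximal in $\F$, the only member of $\F$ containing $U^*$ is $U^*$ itself, so $G_j=U^*$ and hence $F_j=U^*\cup\{j\}$. As $j$ ranges over $[n]-U^*$, these sets are pairwise distinct and distinct from $U^*$, yielding $|\F|\geq n-|U^*|+1$.

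There is no real obstacle here: the whole argument is a direct application of Lemma~\ref{lem:diamondkey}, and the only thing to verify is the distinctness of the family members one extracts, which follows from reading off a single coordinate ($i$ or $j$) where each differs from the others and from $F^*$. The slight subtlety, worth a sentence in the writeup, is that minimality of $S^*$ forces the ``$F_i$'' returned by the lemma to coincide with $S^*$ (respectively, maximality of $U^*$ forces ``$G_j$'' to coincide with $U^*$); without this collapse the lemma would not be strong enough on its own.
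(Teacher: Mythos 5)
Your proof is correct, and it takes a genuinely different route from the paper's: you derive Proposition~\ref{prop:hilo} as a corollary of Lemma~\ref{lem:diamondkey}, using minimality (resp.\ maximality) to force $F_i=S^*$ (resp.\ $G_j=U^*$), whereas the paper reruns a diamond argument from scratch. For minimal $S^*$ and $i\in S^*$ it adds $S^*-\{i\}$ to $\F$, observes that the resulting induced diamond must have $S^*-\{i\}$ as its bottom element (minimality of $S^*$ rules out the top and side cases), and then extracts $T(i)\in\F$ with $S^*-T(i)=\{i\}$ from the requirement that $\{S^*,T_1,T_2,U\}$ not induce a diamond; part~(2) is again handled by duality. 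The two arguments produce the same witnesses---$|S^*|$ sets each missing exactly one element of $S^*$---so the distinctness check is identical; your version is shorter because the case analysis has already been paid for inside the lemma, while the paper's is self-contained and only needs the easy ``bottom'' case of that analysis. One small slip in your part~(2): from $G_j=U^*$ and $F_j-G_j=\{j\}$ you may conclude only that $F_j\subseteq U^*\cup\{j\}$ with $j\in F_j$, not that $F_j=U^*\cup\{j\}$, since the lemma imposes no containment between $F_j$ and $G_j$. This does not damage the proof: distinctness still follows by reading off the coordinate $j$, because $j\in F_j$ while $j'\notin F_j$ for every other $j'\in[n]-U^*$, and $j\notin U^*$.
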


\begin{proof}
	For~{\it(\ref{it:sstar})}, the result is trivial if $\emptyset \in \F$, so assume otherwise and fix a non-empty, minimal $S^*$ in $\F$ and $i \in S^*$.  
	Because $S^*$ is minimal in $\F$, we know $S^* - \{i\}\not\in \F$ and $\F \cup \{S^* - \{i\}\}$ must contain an induced diamond. In particular, there exist distinct $U,T_1,T_2 \in \F$ such that $T_1$ and $T_2$ are incomparable while $ S^* - \{i\} \subset T_j \subset U$ for each $j\in \{1,2\}$.
	
	Since $U,T_1,T_2,S^*\in\F$ do not induce a diamond, $i\not\in T_j$ for some $j\in \{1,2\}$. Without loss of generality, $i\not\in T_1$, and thus $S^* - T_1 = \{i\}$.  
	In particular, for each $i \in S^*$, there exists a unique $T(i)\in \F$ such that $S^*-T(i) = \{i\}$.
	These sets in $\F$, together with $S^*$, imply $|\F| \geq |S^*|+1$.

Part~{\it(\ref{it:ustar})} follows from duality and Part~{\it(\ref{it:sstar})}.
\end{proof}


\section{Induced-$\mc{A}_{k+1}$-saturated results}\label{antichainresults}
 
Now we turn our attention to antichains. The values $\sat^*(n, \mc{A}_{2})=n+1$ and $\sat^*(n, \mc{A}_{3})=2n$ are proven in~\cite{Ferrara}. While the first is clear, the lower bound for the second requires a non-trivial proof. Theorem~\ref{thm:inducedsatF}{\it (\ref{it:anti})} gives that $3n-1 \leq {\rm sat}^*(n,{\mathcal A}_4)\leq 3n+O(1)$. For $k\geq 4$, Theorem~\ref{thm:inducedsatF}{\it (\ref{it:anti})} gives a lower bound of ${\rm sat}^*(n,{\mathcal A}_k)\geq 3n-1$ which is far from the upper bound of approximately $kn$.
 
To establish $\sat^*(n, \mc{A}_{k+1})\geq (1-o_k(1))\frac{kn}{\log_2 k}$, we prove a more technical statement in Theorem~\ref{thm:satlowerbd} below, using two different counting arguments, and Theorem~\ref{thm:antichain} will follow as a corollary. While two bounds are proven, at the end of this section a discussion is included about which of them is best for fixed $k$ and sufficiently large $n$.  
 
\begin{thm}\label{thm:satlowerbd}
Let $k\geq 3$ be an integer and let $\mc{A}_{k+1}$ be an antichain of size ${k+1}$. Then for all $n\geq k$,
\begin{enumerate}
\item $\sat^*(n, \mc{A}_{k+1})\geq k\left\lceil\frac{n}{\lfloor\log_2 k\rfloor +1}\right\rceil -k+2$.\label{it:a}
\item $\sat^*(n, \mc{A}_{k+1})\geq 2n+ \sum\limits_{j=3}^{k}\left\lceil\frac{n}{d^*(j)}\right\rceil -k+2$ where $d^*(j)$ is the largest $d$ such that $\binom{d}{\lfloor d/2\rfloor}\leq j-1$. \label{it:b}
\end{enumerate}
\end{thm}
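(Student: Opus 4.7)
The plan is to apply Dilworth's theorem to $\F$ and exploit a ``filled-interval'' consequence of the induced-saturation condition.

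First, I would establish two basic facts. Since $\emptyset$ is comparable to every element of $\B_n$, adding it cannot create any new antichain, so saturation forces $\emptyset\in\F$; by symmetry $[n]\in\F$. Next, $\F$ has width exactly $k$: the upper bound is $\mc{A}_{k+1}$-freeness, while for the lower bound, any $S\notin\F$ lies in a $(k+1)$-antichain of $\F\cup\{S\}$ that must include $S$, leaving a $k$-antichain in $\F$. Then apply Dilworth's theorem to partition $\F=C_1\sqcup\cdots\sqcup C_k$.

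The structural core is the following filled-interval lemma: for each chain $C_i$, any $T\in\B_n$ comparable with every element of $C_i$ must lie in $\F$. Otherwise the $(k+1)$-antichain in $\F\cup\{T\}$ would have to consist of $T$ together with one element from each chain (since $\F$ has width $k$), yet no element of $C_i$ is incomparable with $T$. Writing $C_i=X_1^{(i)}\subsetneq\cdots\subsetneq X_{c_i}^{(i)}$, each interval $[X_\ell^{(i)},X_{\ell+1}^{(i)}]$ (isomorphic to $\B_d$ for $d=|X_{\ell+1}^{(i)}|-|X_\ell^{(i)}|$) lies entirely in $\F$, as do all subsets of $X_1^{(i)}$ and all supersets of $X_{c_i}^{(i)}$. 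The middle level of any such $\B_d$ is then an antichain in $\F$, forcing $\binom{d}{\lfloor d/2\rfloor}\le k$.

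From here, chain length bounds drive both parts. A chain with tails of size at most $A$ and gaps of size at most $D$ has length at least $\lceil(n-2A)/D\rceil+1$. For part (a), I would use the coarser uniform bound $L=\lfloor\log_2 k\rfloor+1$ (since $\binom{L}{\lfloor L/2\rfloor}\le k$ always, $L$ is a valid bound once one refines the count of how the $k$-chain decomposition restricts to $[X_\ell^{(i)},X_{\ell+1}^{(i)}]$); identifying the distinguished chain containing both $\emptyset$ and $[n]$ as contributing an extra $+2$ over the generic estimate $\lceil n/L\rceil-1$, and summing over the $k$ chains, yields $k\lceil n/L\rceil-k+2$. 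For part (b), I would select the $k$-chain decomposition iteratively, starting with a saturated chain $C_1$ from $\emptyset$ to $[n]$ in $\F$ (of length $n+1$; such a chain exists because the filled-interval lemma lets one always extend a chain one element at a time from $\emptyset$ up to $[n]$), in an order so that the strict interior of chain $j$'s gap is covered only by chains $C_1,\ldots,C_{j-1}$. This tightens the interior width to at most $j-1$, giving gap $\le d^*(j)$ and hence $|C_j|\ge\lceil n/d^*(j)\rceil-1$ for $j\ge 2$. Using $d^*(2)=1$ (so $|C_2|\ge n-1$) and summing produces exactly $2n+\sum_{j=3}^k\lceil n/d^*(j)\rceil-k+2$.

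The principal obstacle is the iterative chain selection for part (b): one must carefully order the decomposition so that the strict interior of chain $j$'s gap is covered by precisely $j-1$ of the other chains rather than all $k-1$. This likely requires a greedy construction tailored to the structure of $\F$, starting from the saturated chain through $\emptyset$ and $[n]$ and peeling off subsequent chains in a controlled way. A secondary obstacle is verifying the gap bound $L$ in part (a) (which is sharper than the $d^*(k+1)$ implied by the middle-level argument alone); this is achieved by a more refined chain-covering count within each $\B_d$ interval. Once the filled-interval lemma is in hand, everything else is careful counting and bookkeeping.
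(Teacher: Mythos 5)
Your overall architecture (forcing $\emptyset,[n]\in\F$, Dilworth into $k$ chains, the ``filled-interval'' lemma — which is exactly the paper's observation that every point of $\B_n$ lying in a gap of a chain must belong to $\F$ — and, for part (2), the greedy/iterative re-selection of chains so that the interior of a gap of chain $j$ is covered by chains $1,\ldots,j-1$) matches the paper's proof. Part (2) of your plan is essentially the paper's argument: the paper builds color class $j$ by extending $\mc{C}_j$ to a maximal chain among the not-yet-colored elements, so maximality forces every element of a gap of class $j$ to carry an earlier color, and Sperner in the interval $\B_d$ gives $\binom{d}{\lfloor d/2\rfloor}\le j-1$. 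What you flag as the ``principal obstacle'' is resolved by precisely this greedy coloring, so that half is sound.

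Part (1), however, has a genuine gap. You need every gap of every chain to have size at most $L=\lfloor\log_2 k\rfloor+1$, and you propose to get this from ``a more refined chain-covering count within each $\B_d$ interval'' for a fixed Dilworth decomposition. No such count suffices: the interior of a gap of size $d$ has $2^d-2$ elements, all in $\F$, and a fixed decomposition only guarantees that each of the other $k-1$ chains meets this interior in at most $d-1$ elements, giving $2^d-2\le(k-1)(d-1)$, i.e.\ $d\le\log_2 k+\log_2\log_2 k+O(1)$. (Concretely, for $k=100$ this permits $d=9$ while part (1) requires $d\le 7$; the middle-level bound $d\le d^*(k+1)$ is likewise too weak, as you note.) The missing idea is that the chain decomposition must be \emph{modified}: the paper calls a gap of $\mc{C}_i$ \emph{wide} if some other chain meets its interior in at least $3$ points, and repeatedly reroutes the middle such point into $\mc{C}_i$ — splitting the wide gap into two gaps of size at most $\bar d-2$ and merging two gaps of the other chain into one of size at most $\bar d-2$ — until no wide gaps remain. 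Only then does each gap meet each other chain in at most $2$ points, yielding $2^d-2\le 2(k-1)$ and hence $d\le\log_2 k+1$, from which $d\le L$ and the stated bound follow. Without this re-decomposition step your argument proves a weaker inequality (enough for the asymptotic corollary, but not for Theorem~\ref{thm:satlowerbd}(1) as stated).
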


\begin{proof}
We will use the following classical theorem of Dilworth:



\begin{thm}[Dilworth~\cite{Dilworth}]\label{thm:Dilworth}
Let $\mc{P}=(P, \leq)$ be a (finite) poset. If $k $ is the size of the largest antichain in $\mc{P}$,  then there exists a family of disjoint chains $\{\mc{C}_1, \mc{C}_2, \dots, \mc{C}_k\}$  such that ${P}=\mc{C}_1\cup \mc{C}_2\cup \dots \cup \mc{C}_k$. 
\end{thm}

Let $\F$ be an induced-$\mc{A}_{k+1}$-saturated family in $\B_n$.  Because adding either $\emptyset$ or $[n]$ to a nonempty family will not increase the size of the largest antichain, $\F$ must contain both $\emptyset$ and $[n]$.  In addition, the largest antichain in $\F-\{\emptyset, [n]\}$ has size $k$. By Dilworth's theorem,  $\F-\{\emptyset, [n]\}$ can be partitioned into $k$ disjoint chains $\mc{C}_1, \mc{C}_2, \dots, \mc{C}_k$. There may be many such partitions, we choose an arbitrary one. We then add both $\emptyset$ and $[n]$ to each of the chains. Therefore, for all distinct $i,j$ in $[k]$, it is the case that $\mc{C}_i\cap \mc{C}_j = \{\emptyset, [n]\}$.

For $i\in [k]$ and $X,Y\in \mc{C}_i$, we call the open interval $(X,Y)=\{Z\in 2^{[n]}: X\subset Z\subset Y\}$ a \emph{gap} provided there is no $Z\in \mathcal{C}_i$ with $X\subset Z \subset Y$. We define the \emph{size} of the gap $(X,Y)$ to be $|Y-X|$. 

For all $i\in [k]$, the following observations will be needed in both parts of the proof: 
\begin{enumerate}[(i)]
\item If $d$ is the maximum size of a gap in $\mc{C}_i$, then the number of elements in $\mc{C}_i$ is at least $\lceil n/d\rceil +1$. \label{it:1}
\item If $(X,Y)$ is a gap in $\mc{C}_i$, then every $Z\in (X,Y)$ must be in $\F$. \label{it:2}
\end{enumerate}
Item \eqref{it:1} follows from the observation that $\{\emptyset,[n]\}\subset \mc{C}_i$ and item \eqref{it:2} follows from the fact that if this were not the case, then $\F\cup\{Z\}$ can be partitioned into $k$ chains, which means that the largest antichain has size $k$, a contradiction to induced saturation. 

\bigskip
\noindent {\bf Part~{\it(\ref{it:a})}:}
 For any $i\in [k]$, we say a gap $(X,Y)$ in $\mc{C}_i$ is \emph{wide} if there exists $j\neq i$ such that $|\mc{C}_j\cap (X,Y)| \geq 3$.

In the case where there is at least one wide gap, let $\bar{d}$ be the maximum size among all wide gaps which is realized by gap $(X,Y)$ in  $\mc{C}_i$ with $S, T, U\in \mc{C}_j\cap (X,Y)$ for $j\neq i$. Without loss of generality, assume $(S,T)$ and $(T,U)$ are gaps in $\mc{C}_j$. 
By removing $T$ from $\mc{C}_j$ and assigning it to $\mc{C}_i$, we obtain a new set of $k$ chains that partition $\F-\{\emptyset, [n]\}$ such that the maximum size among wide gaps has not increased. In particular, the gap $(X,Y)$ has been replaced with gaps $(X,T)$ and $(T,Y)$, each with size at most $\bar{d}-2$. On the other hand, gaps $(S,T)$ and $(T,U)$ have been replaced with $(S,U)$ which has size at most $\bar{d}-2$ since $X\subset S \subset U \subset Y$.

Further, note that any gap in some $\mc{C}_{i'}$ containing $X,T,Y$ must also contain $S,T,U$. 
So since $\bar{d}$ was the maximum size of a wide gap, the number of wide gaps with size $\bar{d}$ has decreased by at least one.
Consequently, performing this operation on wide gaps of maximum size, one at a time, the process eventually terminates with a chain partition of $\F-\{\emptyset, [n]\}$ in which no gaps are wide.

Now we may assume there are no wide gaps in our $k$ chains, so each gap will contain at most two elements from each other chain. Recall from item \eqref{it:2} that each element of $\B_n$ in a gap must be in $\F$, so if $d$ is the maximum gap size, then we have  $2(k-1)\geq 2^{d}-2$. This implies $\log_2 k +1\geq {d}$, which gives 
\begin{equation*}
	\sat^*(n, \mc{A}_{k+1})\geq k\left(\left\lceil \frac{n}{{d}}\right\rceil -1\right)+2 \geq k\left\lceil \frac{n}{\lfloor\log_2 k\rfloor +1}\right\rceil -k+2,
\end{equation*}
completing the proof of part {\it(\ref{it:a})}.  
                    
\bigskip
\noindent {\bf Part~{\it(\ref{it:b})}:} Return to the original chain partition $\mc{C}_1, \ldots, \mc{C}_k$ of $\F-\{\emptyset,[n]\}$ guaranteed by Dilworth's Theorem. Now we use an recursive process to obtain a coloring of $\F-\{\emptyset,[n]\}$ with $k$ colors. To begin, all elements of $\F-\{\emptyset,[n]\}$ are considered to be uncolored. 
Build a maximal chain in $\F-\{\emptyset,[n]\}$ by starting with $\mc{C}_1$ and add elements of $\F-\{\emptyset,[n]\}$, one at a time, provided the result is still a chain. Continue until no more elements of $\F-\{\emptyset,[n]\}$ can be added. The elements in the final chain will form color class 1. Now for some $1\leq j< k$, suppose that we have already defined color classes $1, \ldots, j$. To obtain color class $j+1$, start with $\mc{C}_{j+1}$, remove any elements that have already been colored from $\F-\{\emptyset,[n]\}$ and add uncolored elements from $\F-\{\emptyset,[n]\}$, one at a time, provided the result is still a chain. Continue until this process terminates. The elements in the resulting chain will form color class $j+1$. Finally add $\emptyset$ and $[n]$ to every color class.

By item \eqref{it:2}, each element of $\B_n$ in a gap must be in $\F$, thus the structure of our color classes guarantees that each element in $(X,Y)$  has a color from the set $\{1,\ldots, j-1\}$. Since each color class forms a chain, every element in the largest antichain in $(X,Y)$ must have a different color. For a gap $(X,Y)$ which has size $d$, note that $(X,Y) \cup \{X,Y\}$ is isomorphic to $\B_d$.  Sperner's theorem~\cite{Sperner} gives  that the maximum size of an antichain in $\B_d$ is $\binom{d}{\lfloor d/2\rfloor}$, therefore $\binom{d}{\lfloor d/2\rfloor}\leq j-1$.

For each $j\in \mathbb{N}$, let  $d^*(j)$ be the  largest integer $d$ for which $\binom{d}{\lfloor d/2\rfloor}\leq j-1$. Therefore, each gap in the chain for color class $j$ has size at most $d^*(j)$ and by item \eqref{it:1}, color class $j$ has at least $\lceil n/d^*(j)\rceil +1$ elements. Correcting for $\emptyset$ and $[n]$ being in every color class and noting that $d^*(1)=d^*(2)=1$, we obtain the desired bound:
\begin{equation*}
\sat^*(n, \mc{A}_{k+1}) \geq  \sum\limits_{j=1}^{k}\left(\left\lceil\frac{n}{d^*(j)}\right\rceil -1\right)+2= 2n+ \sum\limits_{j=3}^{k}\left\lceil\frac{n}{d^*(j)}\right\rceil-k+2.
\end{equation*}
\end{proof}

A table of values of $k$, $1\leq k\leq 300$, is given in~\cite{Walker} which determines whether Theorem~\ref{thm:satlowerbd}{\it(\ref{it:a})} or Theorem~\ref{thm:satlowerbd}{\it(\ref{it:b})} is the better lower bound for $\sat^*(n, \mc{A}_{k+1})$ when $n$ is sufficiently large. 

For $n$ sufficiently large and $k\leq 243$ the lower bound provided by {\it(\ref{it:b})} is larger than the bound provided by {\it(\ref{it:a})}. However, for large $k$ ($k\geq 2^{64}$ suffices), {\it(\ref{it:a})} is larger than {\it(\ref{it:b})}. Some straightforward calculations show that {\it(\ref{it:a})} is at least $\left(1-\frac{1}{\log_2k}\right)\frac{kn}{\log_2k}$ for $n\geq\log_2^3k$ and that suffices for our main result. For further discussion of the comparison between {\it(\ref{it:a})} and {\it(\ref{it:b})} see \cite[Section 3.2]{Walker}. 

\section{Conclusion}

Despite the significant improvement over the results in~\cite{Ferrara}, we still believe that $\sat^*(n,\Diamond)=n+1$ and $\sat^*(n, \mc{A}_{k+1})=kn-o(k)$. 

For the case of $\sat^*(n, \mc{A}_{k+1})$ for $k\geq 3$, we note that Theorem~\ref{thm:inducedsatF}{\it(\ref{it:anti})} shows that $\sat^*(n, \mc{A}_{k+1})<(n-1)k-(1/2+o(1))\log_2k$, so a lower bound of $(k-1)n$ is not possible.

As seen in Theorem~\ref{thm:inducedsatF}, there are still many open questions with induced saturation and other poset families to be explored.

Martin was partially supported by a grant from the Simons Foundation (\#353292). Smith acknowledges partial support from NSF-DMS grant \#1344199.  Most of the work for this project was completed while Walker was affiliated with Iowa State University.



\end{document}